\documentclass[reqno]{amsart}
\usepackage{url}
\usepackage[dvips]{graphicx}
\newtheorem{thm}{Theorem}
\newtheorem{lem}{Lemma}          
\newtheorem{defn}{Definition}          
\begin{document}
\bibliographystyle{plain}

\AmS -\TeX
\title[Constructive proofs of Tychonoff's fixed point theorem]{Constructive proofs of Tychonoff's and Schauder's fixed point theorems for sequentially locally non-constant functions}

\author{Yasuhito Tanaka}
\address{Faculty of Economics, Doshisha University, Kamigyo-ku, Kyoto, 602-8580, Japan}
\email{yasuhito@mail.doshisha.ac.jp}

\thanks{This work was supported in part by the Ministry of Education, Science, Sports and Culture of Japan, Grant-in-Aid for Scientific Research (C), 20530165, and the Special Costs for Graduate Schools of the Special Expenses for Hitech Promotion by the Ministry of Education, Science, Sports and Culture of Japan in 2011.}

\subjclass[2000]{Primary~03F65, Secondary~26E40}
\keywords{Sperner's lemma, sequentially locally non-constant functions, Tychonoff's fixed point theorem, Schauder's fixed point theorem, constructive mathematics}

\begin{abstract}
We present a constructive proof of Tychonoff's fixed point theorem in a locally convex space for sequentially locally non-constant functions, As a corollary to this theorem we also present Schauder's fixed point theorem in a Banach space for sequentially locally non-constant functions. We follow the Bishop style constructive mathematics.
\end{abstract}

\maketitle

\section{Introduction}

It is well known that Brouwer's fixed point theorem can not be constructively proved in general case. Sperner's lemma which is used to prove Brouwer's theorem, however, can be constructively proved. Some authors, for example \cite{da} and \cite{veld}, have presented a constructive (or an approximate) version of Brouwer's theorem using Sperner's lemma. Thus, Brouwer's fixed point theorem can be constructively proved in its constructive version. Also Dalen in \cite{da} states a conjecture that a uniformly continuous function $f$ from a simplex to itself, with property that each open set contains a point $x$ such that $x\neq f(x)$ and also for every point $x$ on the faces of the simplex $x\neq f(x)$, has an exact fixed point. We call such a property \emph{local non-constancy}. Further we define a stronger property \emph{sequential local non-constancy}. In another paper \cite{ta1} using Sperner's lemma for modified partition of a simplex we have constructively proved Dalen's conjecture with sequential local non-constancy. 

In this paper, also using the modified version of Sperner's lemma, we will constructively present Tychonoff's fixed point theorem in a locally convex space and prove Schauder's fixed point theorem as a corollary to Tychonoff's theorem\footnote{Formulations of Tychonoff's and Schauder's fixed point theorems in this paper follow those in \cite{is}.}.

 In the next section we prove Sperner's lemma for modified partition of a simplex. In Section 3 we present Tychonoff's fixed point theorem and Schauder's fixed point theorem. We follow the Bishop style constructive mathematics according to \cite{bb}, \cite{br} and \cite{bv}.

\section{Sperner's lemma}\label{sec2}
To prove Sperner's lemma we use the following simple result of graph theory, Handshaking lemma\footnote{For another constructive proof of Sperner's lemma, see \cite{su}. }. A \emph{graph} refers to a collection of vertices and a collection of edges that connect pairs of vertices. Each graph may be undirected or directed. Figure \ref{graph} is an example of an undirected graph. Degree of a vertex of a graph is defined to be the number of edges incident to the vertex, with loops counted twice. Each vertex has odd degree or even degree. Let $v$ denote a vertex and $V$ denote the set of all vertices.
\begin{lem}[Handshaking lemma]
Every undirected graph contains an even number of vertices of odd degree. That is, the number of vertices that have an odd number of incident edges must be even.
\end{lem}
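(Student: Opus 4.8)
The plan is to reduce the statement to a simple parity argument resting on the degree-sum formula, which counts vertex--edge incidences in two different ways. First I would observe that each edge is incident to exactly two vertices, a loop being incident twice to its single endpoint, which is precisely why loops are counted twice in the definition of degree. Summing the degrees over all vertices therefore counts each edge exactly twice, yielding
\[
\sum_{v \in V} \deg(v) = 2|E|,
\]
where $|E|$ denotes the number of edges. Since the right-hand side is manifestly even, the total degree sum is even.

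Next I would split the vertex set $V$ into the set $V_{\mathrm{even}}$ of vertices of even degree and the set $V_{\mathrm{odd}}$ of vertices of odd degree. This is a constructively legitimate partition, because for a finite graph each degree is a concretely computable natural number whose parity can be decided. Writing the degree sum accordingly as
\[
\sum_{v \in V_{\mathrm{even}}} \deg(v) + \sum_{v \in V_{\mathrm{odd}}} \deg(v) = 2|E|,
\]
I would note that the first sum is even, being a sum of even numbers, so that upon subtracting it the sum of the degrees over $V_{\mathrm{odd}}$ is even as well.

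Finally, a sum of odd natural numbers is even if and only if the number of summands is even, which follows by an easy induction on the count of summands. Applying this to the sum over $V_{\mathrm{odd}}$, whose summands are all odd by construction, forces the cardinality of $V_{\mathrm{odd}}$ to be even, and this is exactly the assertion of the lemma.

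I do not expect any genuine obstacle here, since the entire argument is finitary and decidable and therefore poses no difficulty for Bishop-style constructive mathematics. The only point deserving care is the double-counting step, where the convention that loops contribute twice to the degree must be respected so that the identity $\sum_{v \in V} \deg(v) = 2|E|$ holds exactly.
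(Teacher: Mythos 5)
Your proposal is correct and follows essentially the same double-counting argument as the paper, establishing $\sum_{v\in V}\deg(v)=2|E|$ and deducing the parity conclusion; you merely spell out in more detail the final step (splitting $V$ into even- and odd-degree vertices) that the paper dispatches in one sentence. No issues.
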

This is a simple lemma. But for completeness of arguments we provide a proof.
\begin{proof}
Prove this lemma by double counting. Let $d(v)$ be the degree of vertex $v$. The number of vertex-edge incidences in the graph may be counted in two different ways: by summing the degrees of the vertices, or by counting two incidences for every edge. Therefore,
\[\sum_{v\in V}d(v)=2e,\]
where $e$ is the number of edges in the graph. The sum of the degrees of the vertices is therefore an even number. It could happen if and only if an even number of the vertices had odd degree.

\end{proof}

\begin{figure}[t]
\begin{center}
\includegraphics[height=5cm]{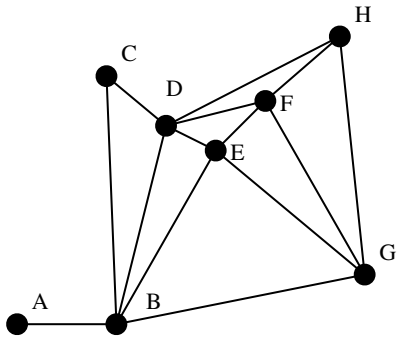}
\end{center}
	\vspace*{-.3cm}
	\caption{Example of graph}
	\label{graph}
\end{figure}

Let $\Delta$ denote an $n$-dimensional simplex. $n$ is a finite natural number. For example, a 2-dimensional simplex is a triangle.  Let partition or triangulate a simplex. Figure \ref{tria1} is an example of partition (triangulation) of a 2-dimensional simplex. In a 2-dimensional case we divide each side of $\Delta$ in $m$ equal segments, and draw the lines parallel to the sides of $\Delta$. Then, the 2-dimensional simplex is partitioned into $m^2$ triangles. We consider partition of $\Delta$ inductively for cases of higher dimension. In a 3 dimensional case each face of $\Delta$ is an 2-dimensional simplex, and so it is partitioned into $m^2$ triangles in the way above mentioned, and draw the planes parallel to the faces of $\Delta$. Then, the 3-dimensional simplex is partitioned into $m^3$ trigonal pyramids. And similarly for cases of higher dimension.

\begin{figure}[t]
\begin{center}
\includegraphics[height=7.5cm]{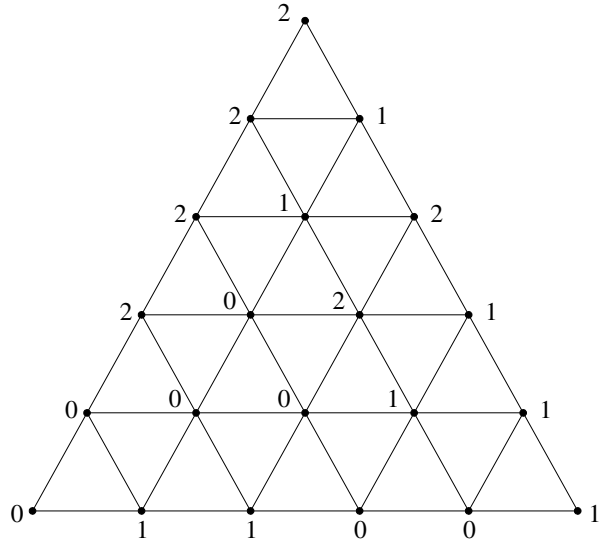}
\end{center}
	\vspace*{-.3cm}
	\caption{Partition and labeling of 2-dimensional simplex}
	\label{tria1}
\end{figure}

Let $K$ denote the set of small $n$-dimensional simplices of $\Delta$ constructed by partition. Vertices of these small simplices of $K$ are labeled with the numbers 0, 1, 2, $\dots$, $n$ subject to the following rules.
\begin{enumerate}
\item The vertices of $\Delta$ are respectively labeled with 0 to $n$. We label a point $(1,0, \dots, 0)$ with 0, a point $(0,1,0, \dots, 0)$ with 1, a point $(0,0,1 \dots, 0)$ with 2, $\dots$, a point $(0,\dots, 0,1)$ with $n$. That is, a vertex whose $k$-th coordinate ($k=0, 1, \dots, n$) is $1$ and all other coordinates are 0 is labeled with $k$. 

\item If a vertex of $K$ is contained in an $n-1$-dimensional face of $\Delta$, then this vertex is labeled with some number which is the same as the number of a vertex of that face.

\item If a vertex of $K$ is contained in an $n-2$-dimensional face of $\Delta$, then this vertex is labeled with some number which is the same as the number of a vertex of that face. And similarly for cases of lower dimension.

\item A vertex contained inside of $\Delta$ is labeled with an arbitrary number among 0, 1, $\dots$, $n$.
\end{enumerate}

Now we modify this partition of a simplex as follows.
\begin{quote}
Put a point in an open neighborhood around each vertex inside $\Delta$, and make partition of $\Delta$ replacing each vertex inside $\Delta$ by that point in each neighborhood. The diameter of each neighborhood should be sufficiently small relatively to the size of each small simplex. We label the points in $\Delta$ following the rules (1) $\sim$ (4).
\end{quote}
Then, we obtain a partition of $\Delta$ illustrated in Figure \ref{tria12}.

We further modify this partition as follows;
\begin{quote}
Put a point in an open neighborhood around each vertex on a face (boundary) of $\Delta$, and make partition of $\Delta$ replacing each vertex on the face by that point in each neighborhood, and we label the points in $\Delta$ following the rules (1) $\sim$ (4). This neighborhood is open in a space with dimension lower than $n$.
\end{quote}
Then, we obtain a partition of $\Delta$ depicted in Figure \ref{tria13}.% in a 2-dimensional case.

\begin{figure}[t]
\begin{center}
\includegraphics[height=7.5cm]{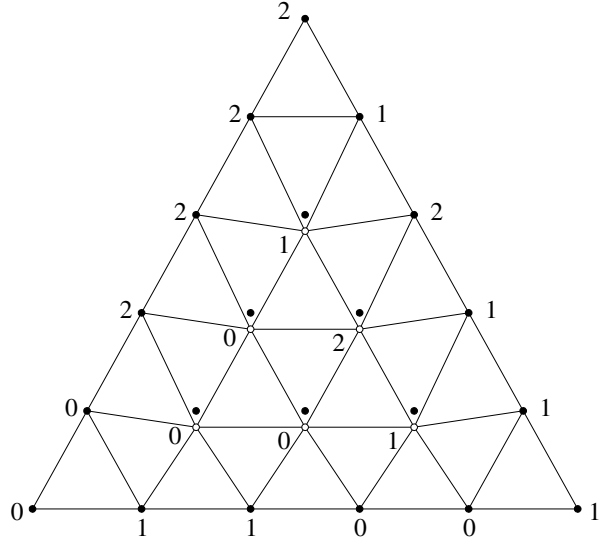}
\end{center}
	\vspace*{-.3cm}
	\caption{Modified partition of a simplex}
	\label{tria12}
\end{figure}

A small simplex of $K$ in this modified partition which is labeled with the numbers 0, 1, $\dots$, $n$ is called a \emph{fully labeled simplex}. Now let us prove Sperner's lemma about the modified partition of a simplex.
\begin{lem}[Sperner's lemma]
If we label the vertices of $K$ following above rules (1) $\sim$ (4), then there are an odd number of fully labeled simplices. Thus, there exists at least one fully labeled simplex. \label{l2}
\end{lem}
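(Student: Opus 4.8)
The plan is to argue by induction on the dimension $n$ and to reduce the counting of fully labeled simplices to the Handshaking lemma applied to a suitably constructed graph. Since all labels are drawn from the finite set $\{0,1,\dots,n\}$ and the label of each vertex is given, every assertion of the form ``this $(n-1)$-face carries exactly the labels $0,1,\dots,n-1$'' is decidable, so the whole counting argument is legitimate in Bishop style constructive mathematics. For the base case $n=1$ the simplex $\Delta$ is a segment whose endpoints are labeled $0$ and $1$; reading the labels of the partition points from one end to the other, each change from $0$ to $1$ or from $1$ to $0$ corresponds to a segment labeled $\{0,1\}$, and since the two ends carry different labels the number of such changes, hence the number of fully labeled segments, is odd.

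For the inductive step I would assume the lemma for dimension $n-1$ and build an undirected graph $G$ as follows. Its vertices are the small $n$-simplices of $K$ together with one extra node representing the outside of $\Delta$. I join two small simplices by an edge whenever they share an $(n-1)$-dimensional face that carries exactly the labels $0,1,\dots,n-1$, and I join a small simplex to the outside node whenever it has such a face lying on the boundary of $\Delta$. The crucial local computation is the degree of each small-simplex node, which equals the number of its $(n-1)$-faces labeled $\{0,1,\dots,n-1\}$: if the simplex is fully labeled it has exactly one such face, obtained by deleting the vertex labeled $n$, so its degree is $1$; if its vertices carry all of $0,1,\dots,n-1$ but with one label repeated and $n$ absent, there are exactly two such faces, giving degree $2$; in every remaining case the degree is $0$. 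Hence a small-simplex node has odd degree precisely when it is a fully labeled simplex.

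It remains to compute the degree of the outside node, and this is where the induction hypothesis and the boundary rules (2) and (3) enter. By the labeling convention (1) the facet of $\Delta$ opposite the vertex labeled $n$ is the only facet whose vertices supply all of the labels $0,1,\dots,n-1$; on every other facet at least one of these labels is missing, so no boundary $(n-1)$-face off this distinguished facet can be labeled $\{0,1,\dots,n-1\}$. The restriction of the labeling to this facet is a Sperner labeling of its induced triangulation as an $(n-1)$-dimensional simplex, so by the induction hypothesis the number of fully labeled small simplices in that facet is odd; thus the outside node has odd degree. Applying the Handshaking lemma to $G$, the total number of odd-degree vertices is even, and since the outside node contributes exactly one odd-degree vertex, the number of fully labeled $n$-simplices must be odd, which proves the lemma.

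I expect the main obstacle to be the verification that the modified partition does not disturb any of this. Because moving an interior vertex relocates the single shared point for all small simplices meeting at it, and because the neighborhoods are taken sufficiently small relatively to each small simplex and, for boundary vertices, open only within the lower-dimensional face, the perturbed configuration is still a genuine triangulation in which adjacent simplices share whole $(n-1)$-faces and the boundary facets retain their combinatorial identity. Since the graph $G$ and all of the degree counts depend only on this combinatorial incidence data and on the assigned labels, and not on the exact positions of the vertices, they are unaffected by the perturbation, and the argument above applies verbatim to the modified partition.
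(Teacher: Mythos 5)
Your proposal is correct and follows essentially the same route as the paper: induction on dimension, the dual graph with one outside node, the degree count (fully labeled $\Leftrightarrow$ degree $1$, degree $2$ or $0$ otherwise), the induction hypothesis applied to the facet opposite the vertex labeled $n$ to make the outside node's degree odd, and the Handshaking lemma to conclude that the number of fully labeled $n$-simplices is odd. Your added remarks on decidability of the label checks and on why the modified partition preserves the combinatorial incidence structure are sensible supplements that the paper leaves implicit.
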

\begin{proof}
See Appendix \ref{app1}.
\end{proof}

\begin{figure}[t]
\begin{center}
\includegraphics[height=8cm]{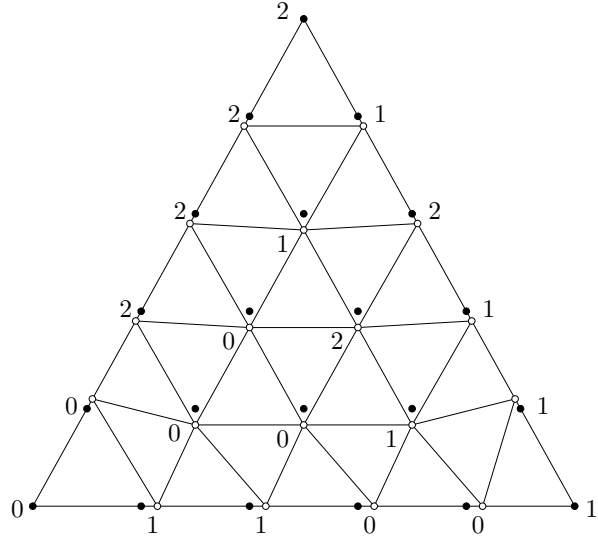}
\end{center}
	\vspace*{-.3cm}
	\caption{Modified partition of a simplex: Two}
	\label{tria13}
\end{figure}

\section{Tychonoff's and Schauder's fixed point theorems for sequentially locally non-constant functions}

In this section we prove Tychonoff's fixed point theorem for sequentially locally non-constant functions in a locally convex space using Sperner's lemma and present Schauder's fixed theorem as a corollary to Tychonoff's fixed point theorem. Our Tychonoff's fixed point theorem is stated as follows;

\begin{thm}[Tychonoff's fixed point theorem for sequentially locally non-constant and uniformly continuous functions]
Let $X$ be a compact (totally bounded and complete) and convex subset of a locally convex space $E$, and $g$ be a sequentially locally non-constant and uniformly continuous function from $X$ to itself. Then, $g$ has a fixed point.
\end{thm}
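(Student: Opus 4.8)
The plan is to reduce the infinite-dimensional problem to the finite-dimensional simplex setting, where the modified Sperner's lemma (Lemma \ref{l2}) supplies approximate fixed points, and then to promote these approximate fixed points to an exact one using completeness together with the sequential local non-constancy of $g$.

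First I would exploit total boundedness. For each $m$, choose a finite $\tfrac1m$-net $\{x_1^m,\dots,x_{N_m}^m\}$ of $X$ and, using local convexity, a uniformly continuous partition of unity $\phi_1^m,\dots,\phi_{N_m}^m$ subordinate to the cover of $X$ by the associated neighborhoods. This lets me define a uniformly continuous self-map $h_m$ of the standard $(N_m-1)$-dimensional simplex $\Delta$ by
\[
h_m(\lambda)=\bigl(\phi_1^m(g(\sigma_m(\lambda))),\dots,\phi_{N_m}^m(g(\sigma_m(\lambda)))\bigr),\qquad \sigma_m(\lambda)=\sum_i\lambda_i\,x_i^m,
\]
which maps $\Delta$ into $\Delta$ precisely because the $\phi_i^m$ form a partition of unity. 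I would then triangulate $\Delta$ as in Section \ref{sec2}, using the \emph{modified} partition so that the Sperner labels can be assigned constructively: the perturbation of interior and boundary vertices into general position removes the need to decide equalities of reals when selecting, at each vertex $v$, a coordinate $k$ with $(h_m(v))_k\le v_k$. The resulting labeling satisfies rules (1)--(4), so Lemma \ref{l2} yields a fully labeled small simplex, and refining the mesh gives $\lambda^m\in\Delta$ with $h_m(\lambda^m)$ arbitrarily close to $\lambda^m$. Setting $y_m=\sigma_m(\lambda^m)$ and writing $P_m(z)=\sum_i\phi_i^m(z)\,x_i^m$, this translates into $\|P_m(g(y_m))-y_m\|$ small; since $P_m$ displaces points of $X$ by less than $\tfrac1m$, I obtain a sequence of approximate fixed points of $g$ itself, with $\|g(y_m)-y_m\|\to 0$. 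This stage parallels the finite-dimensional argument of \cite{ta1} and uses only the approximate conclusion of Sperner's lemma.

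The decisive step is to pass from approximate to exact fixed points, and this is where sequential local non-constancy enters and where I expect the main difficulty to lie. Constructively the sequence $(y_m)$ need not converge, and one cannot in general extract a convergent subsequence from a totally bounded set without further information. Sequential local non-constancy of $g$ is exactly the hypothesis that supplies it: it forces any two sequences of approximate fixed points to become arbitrarily close, so that applying it to subsequences of $(y_m)$ shows $(y_m)$ to be Cauchy with respect to the seminorms defining the topology on $X$. The careful point will be to convert the sequential non-constancy condition into a genuine modulus of Cauchyness for $(y_m)$, handling the whole family of seminorms governing uniform continuity on the totally bounded set rather than a single norm, and keeping the $\varepsilon$--$\delta$ bookkeeping constructively legitimate.

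Finally, completeness of $X$ yields a limit $x^\ast=\lim_m y_m\in X$, and uniform continuity of $g$ gives $\|g(x^\ast)-x^\ast\|\le\|g(x^\ast)-g(y_m)\|+\|g(y_m)-y_m\|+\|y_m-x^\ast\|\to 0$, whence $g(x^\ast)=x^\ast$. Thus the overall architecture is: total boundedness and local convexity for the finite-dimensional reduction, the modified Sperner's lemma (Lemma \ref{l2}) for the existence of approximate fixed points, sequential local non-constancy for the Cauchy property, and completeness together with uniform continuity for the conclusion. Schauder's theorem in a Banach space will then follow as the special case in which the family of seminorms reduces to a single norm.
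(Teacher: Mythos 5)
Your architecture differs from the paper's at the finite-dimensional reduction: the paper fixes a \emph{single} finite $\varepsilon$-approximation $\{x^0,\dots,x^n\}$, conjugates $g$ through the affine map $h$ sending the vertices of one fixed simplex $\Delta$ to these points, finds an \emph{exact} fixed point of $f=h^{-1}\circ g\circ h$ inside one fully labeled small simplex $\delta^n$, and only then transports it back to $X$; you instead run Schauder-type projections $P_m$ at every scale $1/m$ and collect a sequence $(y_m)$ of approximate fixed points of $g$ scattered over all of $X$. The first half of your outline (partition of unity, $h_m:\Delta\to\Delta$, Sperner via Lemma \ref{l2}, $\sum_{i\in F}p_i(g(y_m)-y_m)\to 0$) is workable in principle, though you still owe an argument that $h_m$ inherits from $g$ the modified local non-constancy needed to carry out the labeling constructively: the rule ``choose $k$ with $(h_m(v))_k\le v_k$'' is not decidable for real numbers, and the paper defines local non-constancy of $g$ relative to the correspondence $h$ precisely so that the transported map provably satisfies the labeling hypotheses at every (perturbed) vertex.

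The genuine gap is in the passage from approximate to exact fixed points. Sequential local non-constancy (Definition \ref{sln2}) is a \emph{local} hypothesis: it constrains only pairs of sequences lying in a single open $\varepsilon$-ball $S'$ around one point of the finite approximation $L$. Your terms $y_m$ come from different nets at different scales, and nothing guarantees that they eventually lie in one such ball; the definition does not forbid $g$ from having two distinct fixed points sitting in different balls, in which case $(y_m)$ could oscillate between their neighborhoods and fail to be Cauchy. So ``applying sequential local non-constancy to subsequences of $(y_m)$'' does not produce a modulus of Cauchyness; the difficulty is not merely the bookkeeping over the family of seminorms that you flag. You must first localize: exhibit one small region on which the infimum of $|f(v)-v|$ (resp.\ $\sum_{i\in F}p_i(g(x)-x)$) is zero --- the paper uses the fully labeled simplex $\delta^n$, which lies inside a single ball $S'$ --- and only then run the increasing-binary-sequence argument feeding Lemma \ref{fix0} with $S$ equal to that region. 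This localization is exactly what steps (2)--(3) of the paper's proof accomplish and what your outline omits.
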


A locally convex space consists of a vector space $E$ and a family $(p_i)_{i\in I}$ of seminorms on $X$. $I$ is an index set, for example, the set of positive integers. According to \cite{bv} we define, constructively, total boundedness of a set in a locally convex space as follows;
\begin{defn}[Total boundedness of a set in a locally convex space]
Let $X$ be a subset of $E$, $F$ be a finitely enumerable subset of $I$\footnote{A set $S$ is finitely enumerable if there exist a natural number $N$ and a mapping of the set $\{1, 2, \dots, N\}$ onto $S$.}, and $\varepsilon>0$. By an $\varepsilon$-approximation to $X$ relative to $F$ we mean a subset $T$ of $X$ such that for each $x\in X$ there exists $y\in T$ with $\sum_{i\in F}p_i(x-y)<\varepsilon$. $X$ is totally bounded relative to $F$ if for each $\varepsilon>0$ there exists a finitely enumerable $\varepsilon$-approximation to $X$ relative to $F$. It is totally bounded if it is totally bounded relative to each finitely enumerable subset of $I$.\label{tb}
\end{defn}

And uniform continuity of a function in a locally convex space is defined as follows;

\begin{defn}[Uniform continuity of a function in a locally convex space]
Let $X$, $Y$ be subsets in a locally convex space. A function $g:\ X\longrightarrow Y$ is uniformly continuous in $X$ if for each $\varepsilon>0$ and each finitely enumerable subset $G$ of $J$, which is also an index set, there exist $\delta>0$ and a finitely enumerable subset $F$ of $I$ such that if $x, y\in X$ and $\sum_{i\in F}p_i(x-y)<\delta$, then $\sum_{j\in G}q_j(g(x)-g(y))<\varepsilon$, where $(q_j)_{j\in J}$ is a family of seminorms on $Y$.\label{uc}
\end{defn}
In a metric space or Banach space, a seminorm should be replaced by a metric or a norm in these definitions.

Since $X$ is totally bounded, there exists a finitely enumerable $\varepsilon$-approximation $\{x^0, x^1, \dots, x^n\}$ to $X$. Consider an $n$-dimensional simplex $\Delta$ in Euclidean space with vertices $v^0=(1, 0, 0, \dots, 0)$, $v^1=(0, 1, 0, \dots, 0)$, $\dots$, $v^n=(0, 0, \dots, 1)$. Consider a point $v\in \Delta$ such that $v=\sum_{j=0}^n\alpha_jv^j$, and a function $h$ such that $h: \Delta \longrightarrow X$ and  $h(v)=\sum_{j=0}^n\alpha_jx^j$, where $\sum_{j=0}^n\alpha_j=1,\ \alpha_j\geq 0,\ j=0, 1, \dots, n$. $h$ is clearly a uniformly continuous function. $h(v^j)=x^j$, $h^{-1}(x^j)=v^j$ for all $j\in \{0, 1, \dots, n\}$, and
\[h^{-1}(x)=\sum_{j=0}^n\alpha_jv^j\]
for $x=\sum_{j=0}^n\alpha_jx^j$. $h^{-1}$ is also uniformly continuous. Uniform continuity of $h$ and $h^{-1}$ is described as follows;
\begin{quote}
$h$ is uniformly continuous in $\Delta$ if for each $\varepsilon>0$ and each finitely enumerable subset $F$ of $I$ there exists $\delta>0$ such that if $v, u\in \Delta$ and $|v-u|<\delta$, then $\sum_{i\in F}p_i(h(v)-h(u))<\varepsilon$. 

$h^{-1}$ is uniformly continuous in $X$ if for each $\varepsilon>0$ there exists $\delta>0$ and a finitely enumerable subset $F$ of $I$ such that if $x, y\in X$ and $\sum_{i\in F}(x-y)<\delta$, then $|h^{-1}(x)-h^{-1}(y)|<\varepsilon$.
\end{quote}

Consider a function in a locally convex space $g: X\longrightarrow X$. Then, we can construct a function $f:\Delta \longrightarrow \Delta$ such that
\[f(v)=h^{-1}\circ g\circ h(v).\]

We define local non-constancy and modified local non-constancy of functions in an $n$-dimensional simplex $\Delta$ as follows;
\begin{defn}[Local non-constancy of functions]
\begin{enumerate}
	\item At a point $v$ on a boundary of $\Delta$, $f(v)\neq v$. This means $f_i(v)>v_i$ or $f_i(v)<v_i$ for at least one $i$, where $f_i(v)$ and $v_i$ are the $i$-th components of $f(v)$ and $v$. We use similar notation for other variables.
	\item In any open set of $\Delta$ there exists a point $v$ such that $f(v)\neq v$.
\end{enumerate}
\end{defn}
\begin{defn}[Modified local non-constancy of functions]
\begin{enumerate}
\item At the vertices of $\Delta$, $f(v)\neq v$.
\item In any open set contained in the faces (boundaries) of $\Delta$ there exists a point $v$ such that $f(v)\neq v$. This open set is open in a space of dimension lower than $n$.
\item In any open set of $\Delta$ there exists a point $v$ such that $f(v)\neq v$.
\end{enumerate}
\end{defn}
(2) of the modified local non-constancy implies that every vertex $v$ in a partition of a simplex, for example, as illustrated by white circles in Figure \ref{tria13} in a 2-dimensional case, can be selected to satisfy $f(v)\neq v$ even when points on the faces of $\Delta$ (black circles on the edges) do not necessarily satisfy this condition. Even if a function $f$ does not strictly satisfy the local non-constancy so long as it satisfies the modified local non-constancy, we can partition $\Delta$ to satisfy the conditions for Sperner's lemma.

Let $\partial X^i$ be a set of points in $X$ to which points in each $i$-dimensional face of an $n$-dimensional simplex $\Delta$ correspond by $h$ for $i=0, 1, \dots, n-1$. Since each face of $\Delta$ is totally bounded and $h$ is uniformly continuous, $\partial X^i$ is totally bounded for each $i$.

We define local non-constancy and modified local non-constancy of functions in a locally convex space as follows;
\begin{defn}[Local non-constancy of functions in a locally convex space]
\begin{enumerate}
	\item At a point $x$ to which a point on a boundary of $\Delta$ corresponds by $h$, $\sum_{i\in F}p_i(g(x)-x)>0$.% This means $g_i(x)>x_i$ or $g_i(x)<x_i$ for at least one $i$. 
	\item In any open set of $X$ there exists a point $x$ such that $\sum_{i\in F}p_i(g(x)-x)>0$, for each finitely enumerable subset $F$ of $I$ (in the same way hereafter).
\end{enumerate}
\end{defn}
\begin{defn}[Modified local non-constancy of functions in a locally convex space]
\begin{enumerate}
\item At each point $x$ to which each vertex of $\Delta$ corresponds by $h$, $\sum_{i\in F}p_i(g(x)-x)>0$.
\item In any open set in $\partial X^i$ for each $i$ there exists a point $x$ such that $\sum_{i\in F}p_i(g(x)-x)>0$.% This open set is open in a space of dimension lower than $n$.
\item In any open set of $X$ there exists a point $x$ such that $\sum_{i\in F}p_i(g(x)-x)>0$.
\end{enumerate}
\end{defn}

If $g$ satisfies the modified local non-constancy, $f$ also satisfies the modified local non-constancy. 

Next, by reference to the notion of \emph{sequentially at most one maximum} in \cite{berg}, we define the property of \emph{sequential local non-constancy} for $f:\ \Delta \longrightarrow \Delta$. Each face (boundary) of $\Delta$ is also a simplex, and so it is compact in a space with dimension lower than $n$. The definition of sequential local non-constancy is as follow;
\begin{defn}[Sequential local non-constancy of functions]
\begin{enumerate}
\item At the vertices of a simplex $\Delta$ $f(v)\neq v$.
\item There exists $\bar{\varepsilon}>0$ with the following property. We have a finitely enumerable $\varepsilon$-approximation $L=\{v^1, v^2, \dots, v^l\}$ to each face of $\Delta$ for each $\varepsilon$ with $0<\varepsilon<\bar{\varepsilon}$ such that if for all sequences $(v(m))_{m\geq 1}$, $(u(m))_{m\geq 1}$ in each open $\varepsilon$-ball $S'$, which is a subset of the face, around each $v^i\in L$ $|f(v(m))-v(m)|\longrightarrow 0$ and $|f(u(m))-u(m)|\longrightarrow 0$, then $|v(m)-u(m)|\longrightarrow 0$. $S'$ is open in a space with dimension lower than $n$.
\item For $\bar{\varepsilon}$ defined above there exists a finitely enumerable  $\varepsilon$-approximation $L=\{v^1, v^2, \dots, v^l\}$ to $\Delta$ for each $\varepsilon$ with $0<\varepsilon<\bar{\varepsilon}$ such that if for all sequences $(v(m))_{m\geq 1}$, $(u(m))_{m\geq 1}$ in each open $\varepsilon$-ball $S'$ in $\Delta$ around each $v^i\in L$ $|f(v(m))-v(m)|\longrightarrow 0$ and $|f(u(m))-u(m)|\longrightarrow 0$, then $|v(m)-u(m)|\longrightarrow 0$.

\end{enumerate}\label{sln}
\end{defn}

Similarly, we define sequential local non-constancy for functions in a locally convex space as follows;
\begin{defn}[Sequential local non-constancy of functions in a locally convex space]
\begin{enumerate}
\item At each point $x$ to which each vertex of $\Delta$ corresponds by $h$, $\sum_{i\in F}p_i(g(x)-x)>0$.

\item There exists $\bar{\varepsilon}>0$ with the following property. We have a finitely enumerable  $\varepsilon$-approximation $L=\{x^1, x^2, \dots, x^l\}$ to $\partial X^i$ for each $i$ and each $\varepsilon$ with $0<\varepsilon<\bar{\varepsilon}$ such that if for all sequences $(x(m))_{m\geq 1}$, $(y(m))_{m\geq 1}$ in each open $\varepsilon$-ball $S'$, which is a subset of $\partial X^i$, around each $x^j\in L$ $\sum_{i\in F}p_i(g(x(m))-x(m))\longrightarrow 0$ and $\sum_{i\in F}p_i(g(y(m))-y(m))\longrightarrow 0$, then $\sum_{i\in F}p_i(x(m)-y(m))\longrightarrow 0$.% $S'$ is open in a space with dimension lower than $n$.

\item For $\bar{\varepsilon}$ defined above there exists a finitely enumerable $\varepsilon$-approximation $L=\{x^1, x^2, \dots, x^l\}$ to $X$ for each $\varepsilon$ with $0<\varepsilon<\bar{\varepsilon}$ such that if for all sequences $(x(m))_{m\geq 1}$, $(y(m))_{m\geq 1}$ in each open $\varepsilon$-ball $S'$ in $X$ around each $x^j\in L$ $\sum_{i\in F}p_i(g(x(m))-x(m))\longrightarrow 0$ and $\sum_{i\in F}p_i(g(y(m))-y(m))\longrightarrow 0$, then $\sum_{i\in F}p_i(x(m)-y(m))\longrightarrow 0$.

\end{enumerate}\label{sln2}
\end{defn}
%(1) of this definition is the same as (1) of the definition of modified local non-constancy. 
(1) of this definition is the same as (1) of the definition of modified local non-constancy. 

If $g$ satisfies the sequential local non-constancy, $f$ also satisfies the sequential local non-constancy. 

Now we show the following two lemmas.
\begin{lem}
Sequential local non-constancy means modified local non-constancy in a locally non-convex space.\label{seq}
\end{lem}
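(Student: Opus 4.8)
The plan is to derive each of the three clauses in the definition of modified local non-constancy of $g$ in a locally convex space from the data supplied by the definition of sequential local non-constancy (Definition \ref{sln2}). Clause (1) is immediate, since it is literally the same statement in both definitions: at each point $x$ to which a vertex of $\Delta$ corresponds by $h$, $\sum_{i\in F}p_i(g(x)-x)>0$. So the real content is to obtain clauses (2) and (3), and since these are proved by the same argument (one carried out inside each $\partial X^i$, the other inside $X$), I would give the argument in full for clause (3) and then remark that clause (2) follows verbatim after replacing $X$, its $\varepsilon$-approximation, and the balls $S'$ by the corresponding objects attached to each $\partial X^i$.

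For clause (3), let $U$ be an arbitrary open set of $X$; I want to locate a point $x\in U$ with $\sum_{i\in F}p_i(g(x)-x)>0$. First I would take the $\bar{\varepsilon}>0$ furnished by sequential local non-constancy, fix any $\varepsilon$ with $0<\varepsilon<\bar{\varepsilon}$, and let $L=\{x^1,\dots,x^l\}$ be the associated finitely enumerable $\varepsilon$-approximation to $X$. Because $L$ is an $\varepsilon$-approximation, the open $\varepsilon$-balls $S'$ centred at the points of $L$ cover $X$; hence, after choosing a point of $U$, I can locate a ball $S'$ containing it, so that $U\cap S'$ is an inhabited open subset of $X$. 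Using openness of $U\cap S'$ together with the fact that the seminorms in $F$ separate suitable points of the ball, I would pick two points $x,y\in U\cap S'$ with $\sum_{i\in F}p_i(x-y)>0$.

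The heart of the argument is then a reductio confined to this single, explicitly located ball. Suppose no point of $U$ satisfied the required inequality; then in particular $\sum_{i\in F}p_i(g(x)-x)=0$ and $\sum_{i\in F}p_i(g(y)-y)=0$ for the two chosen points, since each such sum is nonnegative. Forming the constant sequences $x(m)=x$ and $y(m)=y$ (both lying in $S'$), I would have $\sum_{i\in F}p_i(g(x(m))-x(m))\to 0$ and $\sum_{i\in F}p_i(g(y(m))-y(m))\to 0$ trivially, so sequential local non-constancy forces $\sum_{i\in F}p_i(x(m)-y(m))\to 0$, i.e. $\sum_{i\in F}p_i(x-y)=0$, contradicting the choice of $x$ and $y$. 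This yields a point of $U$ with $\sum_{i\in F}p_i(g(x)-x)>0$, which is clause (3).

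The step I expect to be the main obstacle is keeping the logic constructively legitimate. Phrasing the negation as ``$\sum_{i\in F}p_i(g(x)-x)=0$ for every $x\in U$'' and concluding from a contradiction has to be handled within Bishop-style reasoning; the clean way is to run the reductio on one fixed ball $S'$ and on two concretely chosen points, so that the existential ``$\exists x\in U$'' is produced from a genuine contradiction rather than from a non-constructive case split over all of $U$. I would also take care that $U\cap S'$ is genuinely inhabited and open (first choose a point of $U$, then locate a covering ball containing it) and that two points with $\sum_{i\in F}p_i(x-y)>0$ can indeed be selected there, which is where openness of $U\cap S'$ and the separating property of the seminorms in $F$ are used.
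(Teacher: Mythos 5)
Your overall skeleton matches the paper's: clause (1) is immediate, clauses (2) and (3) are handled by the same argument run in $\partial X^i$ and in $X$ respectively, and the witness is sought by locating an $\varepsilon$-ball $S'$ from the approximation $L$ inside the given open set and choosing two points $x,y\in S'$ with $\sum_{i\in F}p_i(x-y)>0$. The problem is the logical engine in the middle. Your reductio assumes $\sum_{i\in F}p_i(g(x)-x)=0$ and $\sum_{i\in F}p_i(g(y)-y)=0$, derives a contradiction via the constant sequences, and then ``yields a point of $U$''. Constructively this yields only $\neg\bigl(\sum_{i\in F}p_i(g(x)-x)=0\wedge\sum_{i\in F}p_i(g(y)-y)=0\bigr)$. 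For nonnegative reals $a,b$, passing from $\neg(a=0\wedge b=0)$ to ``$a>0$ or $b>0$'' is not valid in Bishop-style mathematics (already $\neg(a=0)\Rightarrow a>0$ is a form of Markov's principle), so no witness is actually produced; restricting the reductio to two concretely chosen points does not repair this, because the non-constructive step is not the case split over $U$ but the extraction of a positive lower bound from a negated equality.

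The paper avoids exactly this by the Berger--Bridges--Schuster device: it constructs an increasing binary sequence $(\lambda_m)$ with $\lambda_m=0\Rightarrow\max\bigl(\sum_{i\in F}p_i(g(x)-x),\sum_{i\in F}p_i(g(y)-y)\bigr)<2^{-m}$ and $\lambda_m=1\Rightarrow\max(\cdots)>2^{-m-1}$, interleaves $x,y$ with an auxiliary sequence $(z(m))$ in $S'$ whose displacement tends to $0$ (setting $x(m)=x$, $y(m)=y$ when $\lambda_m=0$ and $x(m)=y(m)=z(m)$ when $\lambda_m=1$), applies sequential local non-constancy to these hybrid sequences to get $\sum_{i\in F}p_i(x(m)-y(m))\to 0$, and then \emph{computes} an $M$ with $\sum_{i\in F}p_i(x(M)-y(M))<\sum_{i\in F}p_i(x-y)$, which forces $\lambda_M=1$. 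That gives the explicit positive bound $\max(\cdots)>2^{-M-1}$, from which ``$\sum_{i\in F}p_i(g(x)-x)>0$ or $\sum_{i\in F}p_i(g(y)-y)>0$'' follows by cotransitivity, and hence an actual witness in the open set. You need to replace your reductio with this (or an equivalent) binary-sequence construction for the proof to be constructively acceptable.
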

The essence of this proof is according to the proof of Proposition 1 of \cite{berg}.
\begin{proof}
Let $S'$ be a set as defined in (2) or (3) of Definition \ref{sln2}. Construct a sequence $(z(m))_{m\geq 1}$ in $S'$ such that $\sum_{i\in F}p_i(g(z(m))-z(m))\longrightarrow 0$. Consider $x$, $y$ in $S'$ with $\sum_{i\in F}p_i(x-y)>0$. Construct an increasing binary sequence $(\lambda_m)_{m\geq 1}$ such that
\[\lambda_m=0\Rightarrow \max\left(\sum_{i\in F}p_i(g(x)-x), \sum_{i\in F}p_i(g(y)-y)\right)<2^{-m},\]
\[\lambda_m=1\Rightarrow \max\left(\sum_{i\in F}p_i(g(x)-x), \sum_{i\in F}p_i(g(y)-y)\right)>2^{-m-1}.\]
We may assume that $\lambda_1=0$. If $\lambda_m=0$, set $x(m)=x$ and $y(m)=y$. If $\lambda_m=1$, set $x(m)=y(m)=z(m)$. Now the sequences $(\sum_{i\in F}p_i(g(x(m))-x(m)))_{m\geq 1}$, $(\sum_{i\in F}p_i(g(y(m))-y(m)))_{m\geq 1}$ converge to 0, and so by sequential local non-constancy $\sum_{i\in F}p_i(x(m)-y(m))\longrightarrow 0$. Computing $M$ such that $\sum_{i\in F}p_i(x(M)-y(M)<\sum_{i\in F}p_i(x-y)$, we see that $\lambda_M=1$. Therefore, $\sum_{i\in F}p_i(g(x)-x)>0$ or $\sum_{i\in F}p_i(g(y)-y)>0$.

Let $S$ be an open set in $X$. Then there exists an $\varepsilon$-approximation to $X$ with sufficiently small $\varepsilon$ such that an $\varepsilon$-ball around some point in that $\varepsilon$-approximation is included in $S$. Therefore, there exists a point $x$ in $S$ such that $\sum_{i\in F}p_i(g(x)-x)>0$.
\end{proof}

\begin{lem}
Let $g$ be a uniformly continuous function from a compact and convex set $X$ to itself in a locally convex space, and assume that $\inf_{x\in S}\sum_{i\in F}p_i(g(x)-x)=0$ where $S$ is nonempty, compact and $S\subset X$. If the following property holds:
\begin{quote}
For each $\varepsilon>0$ there exists $\eta>0$ such that if $x, y\in S$, $\sum_{i\in F}p_i(g(x)-x)<\eta$ and $\sum_{i\in F}p_i(g(y)-y)<\eta$, then $\sum_{i\in F}p_i(x-y)\leq \varepsilon$.
\end{quote}
Then, there exists a point $\xi\in X$ such that $g(\xi)=\xi$, that is, a fixed point of $g$. \label{fix0}
\end{lem}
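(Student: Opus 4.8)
The plan is to witness the value $0$ of the infimum along a sequence and then to use the displacement-uniqueness hypothesis to force that sequence to be Cauchy, so that its limit is a fixed point. Set $\varphi(x)=\sum_{i\in F}p_i(g(x)-x)$. Since $g$ is uniformly continuous and the seminorms $p_i$ together with finite addition preserve uniform continuity, $\varphi$ is uniformly continuous on $X$; because $S$ is totally bounded and complete, $\varphi$ has a constructive infimum on $S$, which by assumption is $0$. Hence for each $m$ I can choose $x(m)\in S$ with $\varphi(x(m))<2^{-m}$, obtaining a sequence with $\varphi(x(m))\longrightarrow 0$. This extraction is exactly the constructive reading of $\inf_{x\in S}\varphi(x)=0$ and needs no appeal to an unattainable minimum.

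Next I would show that $(x(m))_{m\geq 1}$ is Cauchy for the pseudometric $d_F(x,y)=\sum_{i\in F}p_i(x-y)$. Given $\varepsilon>0$, let $\eta>0$ be the number furnished by the stated property and pick $M$ with $2^{-M}<\eta$; then for $m,n\geq M$ both $\varphi(x(m))<\eta$ and $\varphi(x(n))<\eta$ hold, so the property yields $d_F(x(m),x(n))\leq\varepsilon$. This is the heart of the argument: the hypothesis is precisely the constructive substitute for the classical claim that a continuous function on a compact set attains its infimum. Completeness of $S$ then provides a limit $\xi\in S\subset X$, and uniform continuity of $\varphi$ gives $\varphi(\xi)=0$, i.e.\ $\sum_{i\in F}p_i(g(\xi)-\xi)=0$, which is the assertion $g(\xi)=\xi$.

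The step I expect to be the main obstacle is the passage from $d_F$-Cauchyness to a genuine limit in $S$, and from $\varphi(\xi)=0$ to the full equality $g(\xi)=\xi$: the uniqueness hypothesis controls only the single pseudometric attached to $F$, whereas total boundedness and completeness of $S$ refer to the whole family of seminorms. I would resolve this by reading the construction relative to the fixed $F$ throughout, so that the direct conclusion is the vanishing of $\sum_{i\in F}p_i(g(\xi)-\xi)$; the strengthening to every finitely enumerable $F$, which is what legitimately upgrades this to a fixed point in the locally convex space, is then obtained by running the same construction for each $F$ and is the form in which the lemma feeds into the proof of the theorem.
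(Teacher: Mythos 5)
Your proof follows essentially the same route as the paper's: extract a sequence in $S$ witnessing the infimum, use the hypothesis with its $\eta$ to show the sequence is Cauchy for $d_F(x,y)=\sum_{i\in F}p_i(x-y)$, pass to a limit by completeness, and conclude $\sum_{i\in F}p_i(g(\xi)-\xi)=0$ by uniform continuity. The obstacle you flag --- that Cauchyness and the vanishing of the displacement are controlled only for the single pseudometric attached to the fixed $F$, while completeness of $S$ and the upgrade to the genuine identity $g(\xi)=\xi$ refer to the whole family of seminorms --- is a real issue that the paper's own proof silently glosses over in its one-line passage from ``Cauchy sequence'' to ``converges to a limit $\xi\in S$'' and from $\sum_{i\in F}p_i(g(\xi)-\xi)=0$ to ``that is, $g(\xi)=\xi$.''
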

\begin{proof}
Choose a sequence $(x(m))_{m\geq 1}$ in $S$ such that $\sum_{i\in F}p_i(g(x(m))-x(m))\longrightarrow 0$. Compute $M$ such that $\sum_{i\in F}p_i(g(x(m))-x(m))<\eta$ for all $m\geq M$. Then, for $l, m\geq M$ we have $\sum_{i\in F}p_i(x(l)-x(m))\leq \varepsilon$. Since $\varepsilon>0$ is arbitrary, $(x(m))_{m\geq 1}$ is a Cauchy sequence in S, and converges to a limit $\xi\in S$. The continuity of $g$ yields $\sum_{i\in F}p_i(g(\xi)-\xi)=0$, that is, $g(\xi)=\xi$.
\end{proof}
In a metric space or Banach space a seminorm should be replaced by a metric or norm in these lemmas.

Let us prove Tychonoff's fixed point theorem (Theorem 1).

\begin{proof}
We prove this theorem through some steps.
\begin{enumerate}
\item First we show that we can partition $\Delta$ so that the conditions for Sperner's lemma (for modified partition of a simplex) are satisfied. We partition $\Delta$ according to the method in the proof of Sperner's lemma, and label the vertices of simplices constructed by partition of $\Delta$. It is important how to label the vertices contained in the faces of $\Delta$. Let $K$ be the set of small simplices constructed by partition of $\Delta$, $v=(v_0, v_1, \dots, v_n)$ be a vertex of a simplex of $K$, and denote the $i$-th coordinate of $f(v)$ by $f_i$. We label a vertex $v$ according to the following rule,
\[\mathrm{If}\ v_k>f_k,\ \mathrm{we\ label}\ v\ \mathrm{with}\ k.\]
If there are multiple $k$'s which satisfy this condition, we label $v$ conveniently for the conditions for Sperner's lemma to be satisfied.% We do not randomly label the vertices.

Let us check labeling for vertices in three cases.
\begin{enumerate}
	\item Vertices of $\Delta$:

One of the coordinates of a vertex $v$ of $\Delta$ is 1, and all other coordinates are zero. Consider a vertex $(1, 0, \dots, 0)$. By the modified local non-constancy $f(v)\neq v$ means $f_j>v_j$ or $f_j<v_j$ for at least one $j$. $f_i<v_i$ can not hold for $i\neq 0$. On the other hand, $f_0>v_0$ can not hold. When $f_0(v)<v_0$, we label $v$ with 0. Assume that $f_i(v)>v_i=0$ for some $i\neq 0$. Then, since $\sum_{j=0}^nf_j(v)=1=v_0$, we have $f_0(v)<v_0$. Therefore, $v$ is labeled with 0. Similarly a vertex $v$ whose $k$-th coordinate is 1 is labeled with $k$ for all $k\in \{0, 1, \dots, n\}$.

	\item Vertices in the faces of $\Delta$:

Let $v$ be a vertex of a simplex contained in an $n-1$-dimensional face of $\Delta$ such that $v_i=0$ for one $i$ among $0, 1, 2, \dots, n$ (its $i$-th coordinate is 0). $f(v)\neq v$ means that $f_j>v_j$ or $f_j<v_j$ for at least one $j$. $f_i<v_i=0$ can not hold. When $f_k<v_k$ for some $k\neq i$, we label $v$ with $k$. Assume $f_i>v_i=0$. Then, since $\sum_{j=0}^nv_j=\sum_{j=0}^nf_j=1$, we have $f_k<v_k$ for some $k\neq i$, and we label $v$ with $k$. Assume that $f_j>v_j$ for some $j\neq i$. Then, since $v_i=0$ and
\[\sum_{l=0, l\neq j}^n f_l<\sum_{l=0, l\neq j}^nv_l,\]
we have $f_k<v_k$ for some $k\neq i, j$, and we label $v$ with $k$.

We have proved that we can label each vertex of a simplex contained in an $n-1$-dimensional face of $\Delta$ such that $v_i=0$ for one $i$ among $0, 1, 2, \dots, n$ with a number other than $i$. By similar procedures we can show that we can label the vertices of a simplex contained in an $n-2$-dimensional face of $\Delta$ such that $v_i=0$ for two $i$'s among $0, 1, 2, \dots, n$ with a number other than those $i$'s, and so on.

\begin{quote}
Consider a case where, for example, $v_{i}=v_{i+1}=0$. Neither $f_i<v_i=0$ nor $f_i<v_{i+1}=0$ can hold. When $f_k<v_k$ for some $j\neq i, i+1$, we label $v$ with $k$. Assume $f_i>v_i=0$ or $f_{i+1}>v_{i+1}=0$. Then, since $\sum_{j=0}^nv_j=\sum_{j=0}^nf_j=1$, we have $f_k<v_k$ for some $k\neq i,\ i+1$, and we label $v$ with $k$. Assume that $f_j>v_j$ for some $j\neq i, i+1$. Then, since $v_i=v_{i+1}=0$ and
\[\sum_{l=0, l\neq j}^n f_l<\sum_{l=0, l\neq j}^nv_l,\]
we have $f_k<v_k$ for some $k\neq i, i+1, j$, and we label $v$ with $k$.
\end{quote}

	\item Vertices of small simplices inside $\Delta$:

By the modified local non-constancy of $f$ every vertex $v$ in a modified partition of a simplex can be selected to satisfy $f(v)\neq v$. Assume that $f_i>v_i$ for some $i$. Then, since $\sum_{j=0}^n v_j=\sum_{j=0}^n f_j=1$, we have
\[f_k<v_k\]
for some $k\neq i$, and we label $v$ with $k$.
\end{enumerate}

Therefore, the conditions for Sperner's lemma (for modified partition of a simplex) are satisfied, and there exist an odd number of fully labeled simplices in $K$.

\item Suppose that we partition $\Delta$ sufficiently fine so that the distance between any pair of the vertices of simplices of $K$ is sufficiently small. Let $\delta^n$ be a fully labeled $n$-dimensional simplex of $K$, and $v^0, v^1, \dots$ and $v^n$ be the vertices of $\delta^n$. We name these vertices so that $v^0, v^1, \dots, v^n$ are labeled, respectively, with 0, 1, $\dots$, $n$. The values of $f$ at theses vertices are $f(v^0), f(v^1), \dots$ and $f(v^n)$. The $j$-th coordinates of $v^i$ and $f({v^i}),\ i=0, 1, \dots, n$ are, respectively, denoted by $v^i_j$ and $f_j(v^i)$. About $v^0$, from the labeling rules we have $v^0_0>f_0(v^0)$. About $v^1$, also from the labeling rules $v^1_1>f_1(v^1)$. Since $n$ is finite, by the uniform continuity of $f$ there exists $\eta>0$ such that if $|v^i-v^j|<\eta$, then $|f(v^i)-f(v^j)|<\frac{\varepsilon}{2n(n+1)}$ for $\varepsilon>0$ and $i\neq j$. $|f(v^0)-f(v^1)|<\frac{\varepsilon}{2n(n+1)}$ means $f_1(v^1)>f_1(v^0)-\frac{\varepsilon}{2n(n+1)}$. On the other hand, $|v^0-v^1|<\eta$ means $v^0_1>v^1_1-\eta$. We can make $\eta$ satisfying $\eta<\frac{\varepsilon}{2n(n+1)}$. Thus, from
\[v^0_1>v^1_1-\eta,\ v^1_1>f_1(v^1),\ f_1(v^1)>f_1(v^0)-\frac{\varepsilon}{2n(n+1)}\]
we obtain
\[v^0_1>f_1(v^0)-\eta-\frac{\varepsilon}{2n(n+1)}>f_1(v^0)-\frac{\varepsilon}{n(n+1)}\]
By similar arguments, for each $i$ other than 0,
\begin{equation}
v^0_i>f_i(v^0)-\frac{\varepsilon}{n(n+1)}. \label{e1}
\end{equation}
For $i=0$ we have
\begin{equation}
v^0_0>f_0(v^0). \label{e2}
\end{equation}
Adding (\ref{e1}) and (\ref{e2}) side by side except for some $i$ (denote it by $k$) other than 0,
\[\sum_{j=0, j\neq k}^{n} v^0_j>\sum_{j=0, j\neq k}^{n} f_j(v^0)-\frac{(n-1)\varepsilon}{n(n+1)}.\]
From $\sum_{j=0}^{n} v^0_j=1$, $\sum_{j=0}^{n} f_j(v^0)=1$ we have $1-v^0_k>1-f_k(v^0)-\frac{(n-1)\varepsilon}{n(n+1)}$, which is rewritten as
\[v^0_k<f_k(v^0)+\frac{(n-1)\varepsilon}{n(n+1)}.\]
Since (\ref{e1}) implies $v^0_k>f_k(v^0)-\frac{\varepsilon}{n(n+1)}$, we have
\[f_k(v^0)-\frac{\varepsilon}{n(n+1)}<v^0_k<f_k(v^0)+\frac{(n-1)\varepsilon}{n(n+1)}.\]
Thus,
\begin{equation}
|v^0_k-f_k(v^0)|<\frac{(n-1)\varepsilon}{n(n+1)}. \label{e3}
\end{equation}
On the other hand, adding (\ref{e1}) from 1 to $n$ yields
\begin{equation*}
\sum_{j=1}^{n} v^0_j>\sum_{j=1}^{n} f_j(v^0)-\frac{\varepsilon}{(n+1)}.%\label{nash1}
\end{equation*}
From $\sum_{j=0}^{n} v^0_j=1$, $\sum_{j=0}^{n} f_j(v^0)=1$ we have
\begin{equation}
1-v^0_0>1-f_0(v^0)-\frac{\varepsilon}{(n+1)}.\label{nash1}
\end{equation}
Then, from (\ref{e2}) and (\ref{nash1}) we get
\begin{equation}
|v^0_0-f_0(v^0)|<\frac{\varepsilon}{(n+1)}. \label{e24}
\end{equation}
From (\ref{e3}) and (\ref{e24}) we obtain the following result,
\begin{equation*}
|v^0_i-f_i(v^0)|<\frac{\varepsilon}{(n+1)}\ \mathrm{for\ all}\ i. %\label{fp}
\end{equation*}
Thus,
\begin{equation}
|v^0-f(v^0)|<\varepsilon.\label{fp}
\end{equation}
Since $\varepsilon$ is arbitrary, $\inf_{v\in \delta^n}|f(v)-v|=0$.% for $S\in \Delta.

\item Choose a sequence $(\xi(m))_{m\geq 1}$ in $\delta^n$ such that $|f(\xi(m))-\xi(m)|\longrightarrow 0$. In view of Lemma \ref{fix0} it is enough to prove that the following condition holds.
\begin{quote}
For each $\varepsilon>0$ there exists $\eta>0$ such that if $v, u\in \delta^n$, $|f(v)-v|<\eta$ and $|f(u)-u|<\eta$, then $|v-u|\leq \varepsilon$.
\end{quote}
Assume that the set
\[K=\{(v,u)\in \delta^n\times \delta^n:\ |v-u|\geq \varepsilon\}\]
is nonempty and compact. Since the mapping $(v,u)\longrightarrow \max(|f(v)-v|,|f(u)-u|)$ is uniformly continuous, we can construct an increasing binary sequence $(\lambda(m))_{m\geq 1}$ such that
\[\lambda_m=0\Rightarrow \inf_{(v,u)\in K}\max(|f(v)-v|,|f(u)-u|)<2^{-m},\]
\[\lambda_m=1\Rightarrow \inf_{(v,u)\in K}\max(|f(v)-v|,|f(u)-u|)>2^{-m-1}.\]
It suffices to find $m$ such that $\lambda_m=1$. In that case, if $|f(v)-v|<2^{-m-1}$, $|f(u)-u|<2^{-m-1}$, we have $(v,u)\notin K$ and $|v-u|\leq \varepsilon$. Assume $\lambda_1=0$. If $\lambda_m=0$, choose $(v(m), u(m))\in K$ such that $\max(|f(v(m))-v(m)|, |f(u(m))-u(m)|)<2^{-m}$, and if $\lambda_m=1$, set $v(m)=u(m)=\xi(m)$. Then, $|f(v(m))-v(m)|\longrightarrow 0$ and $|f(u(m))-u(m)|\longrightarrow 0$, so $|v(m)-u(m)|\longrightarrow 0$. Computing $M$ such that $|v(M)-u(M)|<\varepsilon$, we must have $\lambda_M=1$. Note that $f$ is a sequentially locally non-constant uniformly continuous function from $\Delta$ to itself. Thus, $f$ has a fixed point.

\item Let $v^*$ be a fixed point of $f$ and $x^*=h(v^*)$. Then, from
\[f(v^*)=h^{-1}\circ g\circ h(v^*),\]
we have
\[v^*=h^{-1}\circ g(x^*),\]
and
\[x^*=h(v^*)=g(x^*).\]
Therefore, $x^*$ is a fixed point of $g$.
\end{enumerate}

We have completed the proof.
\end{proof}

A Banach space is a locally convex space. Therefore, as a corollary to the constructive version of Tychonoff's fixed point theorem we obtain the following theorem.

\begin{thm}[Schauder's fixed point theorem for sequentially locally non-constant and uniformly continuous functions]
Let $X$ be a compact (totally bounded and complete) and convex subset of a Banach space $E$, and $g$ be a sequentially locally non-constant and uniformly continuous function from $X$ to itself. Then, $g$ has a fixed point.
\end{thm}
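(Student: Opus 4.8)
The plan is to derive this theorem as an immediate corollary of Tychonoff's fixed point theorem (Theorem 1), exploiting the fact that every Banach space is in particular a locally convex space. First I would make precise the sense in which a Banach space $(E,\|\cdot\|)$ is a locally convex space: one takes the index set $I$ to be a singleton $\{i_0\}$ and the single seminorm $p_{i_0}$ to be the norm $\|\cdot\|$ itself. Since a norm is in particular a seminorm, the pair consisting of $E$ and the family $(p_{i_0})$ satisfies the definition of a locally convex space, while completeness of $E$ as a Banach space supplies the completeness demanded by Definition \ref{tb}.

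Next I would verify that each hypothesis on $X$ and $g$ in the present statement is exactly the specialization of the corresponding hypothesis in Theorem 1 to this single-seminorm structure. For total boundedness the only nonempty finitely enumerable subset $F\subseteq I$ is $\{i_0\}$, so the condition $\sum_{i\in F}p_i(x-y)<\varepsilon$ collapses to $\|x-y\|<\varepsilon$, and ``totally bounded relative to each finitely enumerable $F$'' reduces to ordinary total boundedness in the norm. Substituting the norm $\|\cdot\|$ for the sum $\sum_{i\in F}p_i(\cdot)$ likewise turns the locally convex definition of uniform continuity (Definition \ref{uc}) and the three clauses of sequential local non-constancy (Definition \ref{sln2}) into their Banach-space counterparts, in accordance with the remark that in a Banach space a seminorm should be replaced by a norm. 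Hence the assumptions that $X$ be compact and convex in $E$ and that $g$ be sequentially locally non-constant and uniformly continuous from $X$ to itself are precisely the assumptions of Theorem 1 for the locally convex space $E$.

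Finally, I would simply invoke Tychonoff's fixed point theorem: under these identifications all of its hypotheses are met, so $g$ has a fixed point $\xi\in X$, which is the desired conclusion. As for the main obstacle, there is essentially none, since the entire analytic content is already contained in the proof of Theorem 1 (via Sperner's lemma together with Lemma \ref{seq} and Lemma \ref{fix0}); the only point requiring care is the routine bookkeeping confirming that the single-norm seminorm family genuinely realizes the locally convex structure and that the collapse of the sums $\sum_{i\in F}p_i$ to the norm faithfully reproduces each clause of the definitions. No new estimate or construction is needed beyond what was carried out for Theorem 1.
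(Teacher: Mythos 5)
Your proposal is correct and matches the paper exactly: the paper derives Schauder's theorem as an immediate corollary of Theorem 1 by observing that a Banach space is a locally convex space, with the norm playing the role of the (single) seminorm as noted in its remarks following Definitions \ref{tb} and \ref{uc}. Your additional bookkeeping about the singleton index set merely makes explicit what the paper leaves implicit.
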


\appendix
\section{Proof of Sperner's lemma}\label{app1}
We prove Sperner's lemma by induction about the dimension of $\Delta$. When $n=0$, we have only one point with the number 0. It is the unique 0-dimensional simplex. Therefore the lemma is trivial. When $n=1$, a partitioned 1-dimensional simplex is a segmented line. The endpoints of the line are labeled distinctly, by 0 and 1. Hence in moving from endpoint 0 to endpoint 1 the labeling must switch an odd number of times, that is, an odd number of edges labeled with 0 and 1 may be located in this way.

\begin{figure}[t]
\begin{center}
\includegraphics{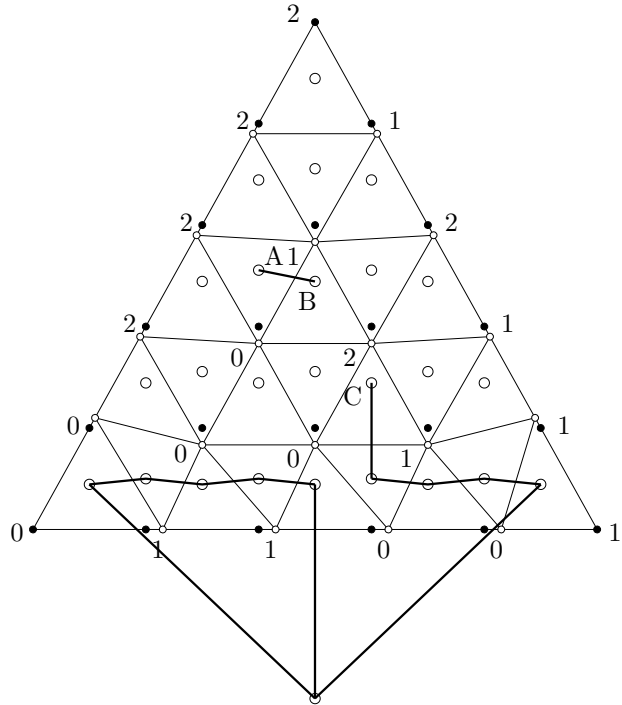}
\end{center}
	\vspace*{-.3cm}
	\caption{Sperner's lemma}
	\label{tria2}
\end{figure}

Next consider the case of 2 dimension. Assume that we have partitioned a 2-dimensional simplex (triangle) $\Delta$ as explained above. Consider the face of $\Delta$ labeled with 0 and 1\footnote{We call edges of triangle $\Delta$ \emph{faces} to distinguish between them and edges of a dual graph which we will consider later.}. It is the base of the triangle in Figure \ref{tria2}. Now we introduce a dual graph that has its nodes in each small triangle of $K$ plus one extra node outside the face of $\Delta$ labeled with 0 and 1 (putting a dot in each small triangle, and one dot outside $\Delta$). We define edges of the graph that connect two nodes if they share a side labeled with 0 and 1. See Figure \ref{tria2}. White circles are nodes of the graph, and thick lines are its edges. Since from the result of 1-dimensional case there are an odd number of faces of $K$ labeled with 0 and 1 contained in the face of $\Delta$ labeled with 0 and 1, there are an odd number of edges which connect the outside node and inside nodes. Thus, the outside node has odd degree. Since by the Handshaking lemma there are an even number of nodes which have odd degree, we have at least one node inside the triangle which has odd degree. Each node of our graph except for the outside node is contained in one of small triangles of $K$. Therefore, if a small triangle of $K$ has one face labeled with 0 and 1, the degree of the node in that triangle is 1: if a small triangle of $K$ has two such faces, the degree of the node in that triangle is 2, and if a small triangle of $K$ has no such face, the degree of the node in that triangle is 0. Thus, if the degree of a node is odd, it must be 1, and then the small triangle which contains this node is labeled with 0, 1 and 2 (fully labeled). In Figure \ref{tria2} triangles which contain one of the nodes $A$, $B$, $C$ are fully labeled triangles.

Now assume that the theorem holds for dimensions up to $n-1$. Assume that we have partitioned an $n$-dimensional simplex $\Delta$. Consider the fully labeled face of $\Delta$ which is a fully labeled $n-1$-dimensional simplex. Again we introduce a dual graph that has its nodes in small $n$-dimensional simplices of $K$ plus one extra node outside the fully labeled face of $\Delta$ (putting a dot in each small $n$-dimensional simplex, and one dot outside $\Delta$). We define the edges of the graph that connect two nodes if they share a face labeled with 0, 1, $\dots$, $n-1$. Since from the result of $n-1$-dimensional case there are an odd number of fully labeled faces of small simplices of $K$ contained in the $n-1$-dimensional fully labeled face of $\Delta$, there are an odd number of edges which connect the outside node and inside nodes. Thus, the outside node has odd degree. Since, by the Handshaking lemma there are an even number of nodes which have odd degree, we have at least one node inside the simplex which has odd degree. Each node of our graph except for the outside node are contained in one of small $n$-dimensional simplices of $K$. Therefore, if a small simplex of $K$ has one fully labeled face, the degree of the node in that simplex is 1: if a small simplex of $K$ has two such faces, the degree of the node in that simplex is 2, and if a small simplex of $K$ has no such face, the degree of the node in that simplex is 0. Thus, if the degree of a node is odd, it must be 1, and then the small simplex which contains this node is fully labeled.

\begin{quote}
If the number (label) of a vertex other than vertices labeled with 0, 1, $\dots$, $n-1$ of an $n$-dimensional simplex which contains a fully labeled $n-1$-dimensional face is $n$, then this $n$-dimensional simplex has one such face, and this simplex is a fully labeled $n$-dimensional simplex. On the other hand, if the number of that vertex is other than $n$, then the $n$-dimensional simplex has two such faces.
\end{quote}

We have completed the proof of Sperner's lemma.

Since $n$ and partition of $\Delta$ are finite, the number of small simplices constructed by partition is also finite. Thus, we can constructively find a fully labeled $n$-dimensional simplex of $K$ through finite steps.

\bibliography{yasuhito}
\end{document}